\newtheorem{corollary}{Corollary}[section]
\newtheorem{definition}{Definition}[section]
\newtheorem{lemma}{Lemma}[section]
\newtheorem{theorem}{Theorem}[section]
\newtheorem{example}{Example}[section]
\newcommand{\bbZ}{{\mathbb Z}}
\newcommand{\bbF}{{\mathbb F}}
\begin{document}

\title{Improved Constructions of Frameproof Codes}

\author{Yeow~Meng~Chee,~{\it Senior~Member,~IEEE},~and~Xiande~Zhang
        \thanks{The research of Y. M. Chee and X. Zhang was supported in part by the National Research
Foundation of Singapore under Research Grant NRF-CRP2-2007-03. The research of Y.
M. Chee was also supported in part by the Nanyang Technological
University under Research Grant M58110040.}
        \thanks{Y. M. Chee ({\tt ymchee@ntu.edu.sg}) 
        is with Division of Mathematical Sciences, School of
Physical and Mathematical Sciences, Nanyang Technological
University, Singapore 637371.} 

\thanks{X. Zhang ({\tt xdzhangzju@163.com}) is with School of Mathematical Sciences, Monash University,
 VIC 3800, Australia. This work was done while the author was with the Division of Mathematical
Sciences, School of Physical and Mathematical Sciences, Nanyang
Technological University, 21 Nanyang Link, Singapore 637371.}
%\thanks{Copyright (c) 2012 IEEE. Personal use of this material is permitted.  }
}
\maketitle

\begin{abstract}
\boldmath Frameproof codes are used to preserve the security in
 the context of coalition when fingerprinting digital data. Let $M_{c,l}(q)$ be
the largest cardinality of a $q$-ary $c$-frameproof code of length
$l$ and $R_{c,l}=\lim_{q\rightarrow \infty}M_{c,l}(q)/q^{\lceil
l/c\rceil}$. It has been determined by Blackburn that $R_{c,l}=1$ when $l\equiv
1\ (\bmod\ c)$,  $R_{c,l}=2$ when $c=2$ and $l$ is even, and $R_{3,5}=\frac{5}{3}$. In this paper, we give a recursive construction for
$c$-frameproof codes of length $l$ with respect to the alphabet
size $q$. As  applications of this construction, we establish the existence results for $q$-ary
$c$-frameproof codes of length $c+2$ and size
$\frac{c+2}{c}(q-1)^2+1$
 for all odd $q$ when $c=2$ and for all $q\equiv 4\pmod{6}$ when
 $c=3$. Furthermore,  we show
that $R_{c,c+2}=(c+2)/c$ meeting the upper bound given by Blackburn, for all integers $c$ such that $c+1$
is a prime power.

\end{abstract}

\begin{IEEEkeywords}
\boldmath Fingerprinting, frameproof codes,  orthogonal array.
\end{IEEEkeywords}

\section{Introduction}

\IEEEPARstart{F}{rameproof} codes  were first introduced by Boneh and
Shaw \cite{Boneh1998IEEE} in 1998  to protect copyrighted materials.
When a distributor wants
to sell copies of a  digital product, he randomly chooses $l$
fixed positions in the digital data. For each copy, he marks each
position with one of $p$ different states. Such a collection of marked positions
in each copy is known as a {\it fingerprint}, which
can be thought as a codeword of length $l$ over an alphabet $F$ of
size $q$.
 The users don't know the positions and  states embedded in the data, so they cannot remove them.
However, in the context of collusion, some
users can share and compare their copies, and they can easily
discover some or perhaps all marked positions and create illegal copies.  A set of fingerprints is called to be {\it $c$-frameproof}
if any coalition of at most $c$ users can not frame another
 user not in the coalition.
\subsection{Related Objects}
The study of related objects to frameproof codes in the literature goes back to 1960s, as R\'enyi first introduced the concept of a separating system in his papers concerning certain information-theoretic problems \cite{renyi61a,renyi61b,renyi61c,renyi62a}. After that, the concept was defined again in cryptography several decades later, under different scenarios and purposes.
Besides the frameproof codes suggested by Boneh and
Shaw \cite{Boneh1998IEEE}, variants of such codes have become objects of study by many researchers. For instance,
\begin{enumerate}
\item[$\centerdot$]{\it secure frameproof codes} (SFP)
\cite{StinsonJSPS2000} are defined to demand that no coalition of
at most $c$  users can frame another disjoint coalition
of at most $c$ users; \item[$\centerdot$]Codes with {\it
identifiable parent property} (IPP) \cite{HollmannJCTA1998,DongSiam2004,NogaCPC2004,BlackburnJCTA2003}
require that no coalition of at most $c$  users can
produce a copy that cannot be traced back to at least one member
of the coalition; \item[$\centerdot$]{\it Traceability codes} (TA)
\cite{StinsonSiam1998,Chor1994AC,Fiat2001JC,StaddonIEEE2001} have much stronger identifiable parent
property which allows an efficient (i.e., linear-time in the size of the code) algorithm
to determine one member of the coalition.
\end{enumerate}

 The intimate relations
among such kinds of codes and connections with other combinatorial
objects, such as certain types of separating hash families,
cover-free families and combinatorial group testings were described
in
\cite{Chor1994AC,Fiat2001JC,StinsonSiam1998,StaddonIEEE2001,ColbournIEEE2010}.
These have motivated much research investigating the constructions
and bounds of these  codes, and of related objects, see for
example
\cite{DongSiam2004,BlackburnJCTA2010,NogaCPC2004,BlackburnJCTA2003,WangJCTA2001,BazrafshanJCTA2011,HollmannJCTA1998,BlackburnJCTA2008,StinsonJCTA2008,XingIEEE2002,CohenEL2000,Blackburn2003SIAM}.

\subsection{Preliminaries}

In this paper, we mainly investigate the upper bounds and constructions of
frameproof codes. The definition we use  was explicitly given by Fiat and Tassa
\cite{Fiat2001JC}, who credited Chor, Fiat, and Naor
\cite{Chor1994AC} with its first use.

Let $F$ be a finite set of cardinality $q$ and $l$ be a positive
integer. The set $\{1,\ldots,l\}$ is denoted by $[l]$. For a
$q$-ary word $x\in F^l$ and an integer $i \in[l]$ we write $x_i$
for the $i$th component of $x$. Let $P \subset F^l$ be a set of
words of length $l$. The set of {\it descendants} of $P$,
$desc(P)$, is the set of all words $x\in F^l$ such that for all $i
\in[l]$, there exists $y\in P$ satisfying $x_i = y_i$, i.e.,
\begin{equation*}
desc(P)=\{x\in F^l: x_i\in \{y_i:y \in P\}, i\in [l]\}.
\end{equation*}
 Let $c$ be an integer such that $c \geq2$. A {\it $c$-frameproof
code} is a subset $C \subset F^l$ such that for all $P \subset C$ with
$|P|\leq c$, we have that $desc(P) \cap C = P$.

Let $M_{c,l}(q)$ be the largest cardinality of a $q$-ary
$c$-frameproof code of length $l$. Staddon, Stinson and Wei \cite{StaddonIEEE2001} proved an upper bound
for $M_{c,l}(q)$, $q\geq 2$, which is given as follows:
\begin{equation*}
M_{c,l}(q)\leq c\big(q^{\lceil
l/c\rceil}-1\big).
\end{equation*}
The exact value of $M_{c,l}(q)$ is not known except for the
trivial case, i.e., when $l\leq c$ and $q\geq 2$,
$M_{c,l}(q)=l(q-1)$ shown by Blackburn \cite{Blackburn2003SIAM}.
So the more interesting and difficult case is  when $l
>c$.
 In \cite{Blackburn2003SIAM}, Blackburn also established an asymptotic upper bound for
$M_{c,l}(q)$, which is restated as follows.
\vskip 10pt
\begin{theorem}\cite{Blackburn2003SIAM}
\label{lowerbound}  Let $c$,   $ l$ and $q$  be positive integers
greater than $1$. Let $t \in[c]$ be an integer such that $t\equiv l\pmod{c}$.
Then
\begin{equation*}
M_{c,l}(q)\leq\big(\frac{l}{l-(t-1)\lceil l/c\rceil}\big)q^{\lceil
l/c\rceil}+O(q^{\lceil l/c\rceil-1}).
\end{equation*}
\end{theorem}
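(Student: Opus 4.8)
The plan is to work from a covering reformulation of the frameproof property. For codewords $\mathbf{x},\mathbf{y}\in C$ let $M(\mathbf{y})=\{i\in[l]:y_i=x_i\}$ be the set of positions on which $\mathbf{y}$ agrees with $\mathbf{x}$. Then $\mathbf{x}\in desc(P)$ precisely when $\bigcup_{\mathbf{y}\in P}M(\mathbf{y})=[l]$, so $C$ is $c$-frameproof if and only if, for every $\mathbf{x}\in C$, no $c$ of the sets $M(\mathbf{y})$ with $\mathbf{y}\neq\mathbf{x}$ cover $[l]$. Dually, call $S\subseteq[l]$ \emph{identifying} for $\mathbf{x}$ if no codeword other than $\mathbf{x}$ agrees with $\mathbf{x}$ throughout $S$; the condition is then equivalent to requiring that, for every partition of $[l]$ into $c$ blocks, at least one block be identifying for $\mathbf{x}$ (otherwise one codeword per block would frame $\mathbf{x}$). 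Everything is thereby reduced to counting codewords through their identifying sets.

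Writing $k=\lceil l/c\rceil$ and $t\equiv l\pmod c$ with $t\in[c]$, I would first record the elementary estimate. Partition $[l]$ into $c$ blocks, $t$ of size $k$ and $c-t$ of size $k-1$, and assign each codeword to one of its identifying blocks. Codewords sharing an identifying block have distinct projections onto it, so a block of size $s$ receives at most $q^{s}$ codewords. This gives $|C|\le tq^{k}+(c-t)q^{k-1}$, i.e.\ the constant $t$, which is in general weaker than the claimed $l/(l-(t-1)k)$.

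The substance of the theorem is improving $t$ to $l/(l-(t-1)k)$, and my plan is a double count of incidences $(\mathbf{x},i)$ between codewords and positions. The target inequality $\bigl(l-(t-1)k\bigr)|C|\le l\,q^{k}+O(q^{k-1})$ suggests singling out, for each $\mathbf{x}$, a set of \emph{good} positions so that (A) every codeword has at least $l-(t-1)k$ good positions, while (B) every fixed position is good for at most $q^{k}+O(q^{k-1})$ codewords. For (B) the natural device is to attach to each position $i$ a fixed $k$-set $T_i\ni i$ and call $i$ good for $\mathbf{x}$ when $T_i$ is identifying for $\mathbf{x}$; distinct $T_i$-projections then give the cap $q^{k}$ per position. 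Summing over positions yields $\bigl(l-(t-1)k\bigr)|C|\le l\,q^{k}+O(q^{k-1})$, and dividing gives the stated bound.

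The hard part is establishing (A) compatibly with (B): one must show that, outside $O(q^{k-1})$ exceptional codewords, each $\mathbf{x}$ is good at all but at most $(t-1)k$ positions. Here I would use the freedom in the block structure, arranging the reference sets $T_i$ so that the non-identifying positions of any given codeword are confined to at most $t-1$ blocks of size $k$; the covering reformulation is invoked to argue that a codeword with more unavoidable agreements could be framed by $c$ suitably chosen codewords. I expect the genuine obstacles to be twofold: (i) making the per-position reference sets and the per-codeword good sets consistent so that (A) and (B) hold against the \emph{same} structure rather than against competing partitions; and (ii) isolating the codewords that carry a globally unique symbol at some position—there are only $O(q^{k-1})$ of these per position—and absorbing them into the error term, since these are precisely the codewords that the clean counting fails to capture.
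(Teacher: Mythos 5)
Your setup is sound as far as it goes, but two things need saying. First, the paper you are reading contains no proof of Theorem~\ref{lowerbound}: it is quoted from Blackburn \cite{Blackburn2003SIAM}, so the only proof to measure against is his. Your covering reformulation, the notion of identifying sets, the crude estimate $|C|\le tq^{k}+(c-t)q^{k-1}$ from a balanced partition, and the observation that codewords possessing an identifying set of size $\le k-1$ number only $O(q^{k-1})$ (at most $q^{k-1}$ per $(k-1)$-set, by distinctness of projections) are all correct and do match the standard opening of that proof. But the proposal stops precisely where the theorem begins: conditions (A) and (B) are stated as targets, and the one step carrying all the content --- that every non-exceptional codeword is ``good'' at no fewer than $l-(t-1)k$ positions --- is explicitly deferred (``the hard part is establishing (A)\ldots''). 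As written, this is a plan, not a proof.

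Moreover, the particular plan is unlikely to close. Fixing one reference $k$-set $T_i$ per position demands a single system that works against \emph{every} codeword simultaneously, and your justification for (A) assumes the non-identifying positions of a codeword are ``confined to at most $t-1$ blocks.'' That structural claim is false: the family of non-identifying $k$-sets of a codeword need not sit inside $t-1$ stars --- already for $c=3$, $l=5$, $k=2$ it can be a triangle, and in general it can be the complete $k$-uniform hypergraph on a $(kt-1)$-set, both fully consistent with $c$-frameproofness. The argument that works replaces your $l$ fixed sets by an average over \emph{all} $\binom{l}{k}$ $k$-subsets: each $k$-subset identifies at most $q^{k}$ codewords, so $\sum_{x}d(x)\le\binom{l}{k}q^{k}$ where $d(x)$ counts identifying $k$-subsets of $x$, and the theorem reduces to the extremal lemma that a codeword with no identifying set of size $<k$ has at most $(t-1)\binom{l-1}{k-1}$ non-identifying $k$-subsets, equivalently $d(x)\ge\frac{l-(t-1)k}{l}\binom{l}{k}$. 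That lemma is where the frameproof hypothesis is really spent: since every $(k-1)$-set is non-identifying and $l=t k+(c-t)(k-1)$, any $a$ non-identifying $k$-sets must cover at most $a(k-1)+t-1$ positions (else $a$ framing codewords plus $c-a$ more chosen via $(k-1)$-sets would cover $[l]$), and one must then bound the size of a $k$-uniform hypergraph satisfying this Erd\H{o}s-matching-type restriction. None of this appears in your proposal, and it cannot be waved through as a routine verification.
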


 Let $R_{c,l}(q)=M_{c,l}(q)/q^{\lceil l/c\rceil}$ and $R_{c,l}=\lim_{q\rightarrow
\infty}R_{c,l}(q)$. Then it is easy to show
the following result by Theorem~\ref{lowerbound}.
\vskip 10pt
\begin{corollary}
\label{lowerbound1} Let $c$ and $l$  be positive integers
greater than $1$. Let $t \in[c]$ be an integer such that $t\equiv l\pmod{c}$.
Then
\begin{equation*}
R_{c,l}\leq\frac{l}{l-(t-1)\lceil l/c\rceil}.
\end{equation*}
\end{corollary}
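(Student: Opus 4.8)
The plan is to read the bound off directly from Theorem~\ref{lowerbound} by normalizing and then passing to the limit; since the substantive work is already contained in that theorem, I expect the corollary to follow with essentially no difficulty.

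First I would take the inequality supplied by Theorem~\ref{lowerbound},
\begin{equation*}
M_{c,l}(q)\leq\Big(\frac{l}{l-(t-1)\lceil l/c\rceil}\Big)q^{\lceil l/c\rceil}+O(q^{\lceil l/c\rceil-1}),
\end{equation*}
and divide both sides by $q^{\lceil l/c\rceil}$. By definition the left-hand side is exactly $R_{c,l}(q)$, while the error contribution simplifies to
\begin{equation*}
\frac{O(q^{\lceil l/c\rceil-1})}{q^{\lceil l/c\rceil}}=O\Big(\frac{1}{q}\Big).
\end{equation*}
Hence for all sufficiently large $q$ one has $R_{c,l}(q)\leq \frac{l}{l-(t-1)\lceil l/c\rceil}+O(1/q)$.

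Finally I would let $q\to\infty$: the remainder $O(1/q)$ tends to $0$, so passing to the limit yields
\begin{equation*}
R_{c,l}=\lim_{q\to\infty}R_{c,l}(q)\leq\frac{l}{l-(t-1)\lceil l/c\rceil},
\end{equation*}
as claimed. The only point deserving a word of care is the limit defining $R_{c,l}$: the argument literally controls $\limsup_{q\to\infty}R_{c,l}(q)$, and the stated inequality for $R_{c,l}$ is immediate once this quantity is understood as the limit, which is precisely how $R_{c,l}$ was defined. Beyond this bookkeeping there is no real obstacle, since the genuine estimate has already been carried out in Theorem~\ref{lowerbound}.
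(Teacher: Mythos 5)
Your proposal is correct and is precisely the argument the paper intends: the corollary is stated as an immediate consequence of Theorem~\ref{lowerbound}, obtained by dividing by $q^{\lceil l/c\rceil}$ and letting $q\to\infty$. Your remark about $\limsup$ versus the limit defining $R_{c,l}$ is a reasonable point of care, but otherwise there is nothing to add.
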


 When $l> c$, Blackburn
\cite{Blackburn2003SIAM} showed that $R_{c,l}=1$ when $l\equiv
1\ (\bmod\ c)$, and $R_{c,l}=2$ when $c=2$ and $l$ is even. The
next most tempting case is when $t=2$, i.e., $l\equiv 2\pmod{c}$.
Blackburn asked in \cite[Section 8]{Blackburn2003SIAM} the
following question: Is there a $q$-ary
 $c$-frameproof code of length $l$ with cardinality approximately $l/(l-\lceil
 l/c\rceil) q^{\lceil l/c\rceil}$ when $l\equiv 2\pmod{c}$? In fact, the answer is yes when $l=5$ and $c=3$, which was  proved in \cite[Construction 4]{Blackburn2003SIAM}   by constructing a $3$-frameproof code of
length $5$ of sufficiently large cardinality.

 Inspired by this question, we pursue  the exact values for $R_{c,l}$ with $l=c+2$ in the following sections by constructing
 $c$-frameproof codes with cardinality asymptotically meeting the upper bound in Theorem~\ref{lowerbound}. The paper is organized as follows.
In Section II, we present a general recursive
 construction for $c$-frameproof codes of length $l$ with respect to the alphabet size $q$ by introducing the
  definition of Property $P(t)$ for a frameproof code. As  applications of this method, we establish the existence results of $q$-ary $c$-frameproof codes of length $c+2$ and size $\frac{c+2}{c}(q-1)^2+1$
 for all odd $q$ when $c=2$ and for all $q\equiv 4\pmod{6}$ when $c=3$ in Section III.  In Section IV, we apply the method to  the frameproof
  codes obtained from orthogonal arrays to prove that the
 upper bound for $R_{c,l}$ in Corollary~\ref{lowerbound1} can be achieved for all $c\geq2$ and $l=c+2$ when $c+1$ is a prime
 power. Finally, we conclude our paper in Section V.

\section{A General Recursive Construction}
This section serves to describe a general recursive construction
for $c$-frameproof codes. First, we introduce the definition of
Property $P(t)$ for a code, where $t$ is a positive integer.

\vskip 10pt
\begin{definition}\label{defpt}
Let $C$ be an $s$-ary $c$-frameproof code
 of length $l$ over an alphabet $S$ of size $s$.  $C$ is said to satisfy
{\it Property $P(t)$} if there exists a special element say
$\infty\in S$, such
 that each codeword contains at most $t-1$ $\infty$'s and is uniquely determined by specifying $t$ of its
components that are not equal to
 $\infty$.
\end{definition}
\vskip 10pt

Now suppose $C$ is an $s$-ary $c$-frameproof code
 of length $l$ over $S$  satisfying  Property $P(t)$ with a special element $\infty$, $l\geq 2t-1$.
For convenience,  let $T=S\setminus\{\infty\}$. Suppose $C$ has
cardinality $M$. Denote the codewords of $C$ by $B_i$ with $i\in
[M]$. By Definition~\ref{defpt}, there are at most $t-1$
components with $\infty$ of $B_i$ for each $i\in [M]$.
Furthermore, a codeword $B_i$ is uniquely determined by specifying
$t$ of its components  that are not equal to
 $\infty$.

Let $m$ be a prime power such that $m\geq l-1$ and $\bbF_m$ be the finite field of order $m$. Let
$\{\alpha_1,\alpha_2,\ldots,\alpha_{l}\}$ be a set of $l$ distinct
elements in the alphabet $\bbF_m\cup \{\infty\}$. For each polynomial $f\in
\bbF_m[X]$, let $f_{\infty}$ denote the coefficient of $X^{t -1}$
in $f$. For each $B_i\in C$, denote $B_i=(b_1,b_2,\ldots,b_l)$.
Let $Y_i$ be a set of words of length $l$ over $\bbF_m\cup
\{\infty\}$, such that each word $y=(y_1,\ldots,y_l)\in Y_i$ is
defined by
\begin{equation*}
y_j=\begin{cases}\infty, &\text{if $b_j=\infty$;}\\
f_{\infty}, &\text{if $b_j\neq\infty$ and $\alpha_j=\infty$;}\\f(\alpha_j),&
\text{otherwise,}
\end{cases}
\end{equation*}
with $j\in [l]$, where $f$ runs over $\bbF_m[X]$ with $\deg f\leq
t-1$. So each word $y\in Y_i$ is uniquely determined by specifying
$t$ components  that are not equal to $\infty$. Moreover, since $l\geq
2t-1$, i.e., $l-(t-1)\geq t$, all the words of $Y_i$ are distinct.
Hence each set $Y_i$ has cardinality $m^{t}$.

Now for each $i\in [M]$, define a set $C_i$ of words of length $l$
over $(T\times\bbF_m)\cup \{(\infty,\infty)\}$ by
\begin{equation*}
\begin{split}
C_i=\{&((b_1,y_1),(b_2,y_2),\ldots,(b_l,y_l)):
B_i=(b_1,b_2,\ldots,b_l)\\
& \text{ and }(y_1,y_2,\ldots,y_l)\in Y_i\}.
\end{split}
\end{equation*}
Let $C'=\cup_{i=1}^MC_i$. It is clear that all $C_i$ are disjoint,
thus $|C'|=Mm^t$. The following lemma proves that $C'$ is also a
$c$-frameproof code.

\vskip 10pt
\begin{lemma}
\label{p2code} Let $m$ be a prime power and $l,s,t$ be
positive integers such that $m\geq l-1$ and $2t-1\leq l$. Define
$q=(s-1)m+1$. Suppose that $c\geq t$ is an integer such that $l=c(t-1)+r$
for some $r\in \{t,t+1,\ldots,c\}$. If there exists an $s$-ary
length $l$ $c$-frameproof code of cardinality $M$ satisfying
Property $P(t)$, then there exists a $q$-ary length $l$
$c$-frameproof code of cardinality $Mm^t$.
\end{lemma}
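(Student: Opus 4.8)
The plan is to prove directly that $C'$ is $c$-frameproof: fix any $P' \subseteq C'$ with $|P'| \le c$ and any $x \in desc(P') \cap C'$, and show $x \in P'$ by a counting argument, assuming $x \notin P'$ for contradiction. Write $x = ((b_1,y_1),\ldots,(b_l,y_l))$, so that its first coordinate is a codeword $B_{i_0} = (b_1,\ldots,b_l) \in C$ and its second coordinate lies in $Y_{i_0}$, i.e.\ is the word attached to a polynomial $f$ of degree at most $t-1$. The special symbol of the enlarged alphabet is $(\infty,\infty)$, which occurs in $x$ exactly at the positions where $b_j = \infty$; by Property $P(t)$ there are at most $t-1$ of these, so the set $J = \{j : b_j \ne \infty\}$ has size $|J| \ge l-(t-1)$.

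The heart of the argument is a single uniform bound: every codeword $z \in P'$ with $z \ne x$ agrees with $x$ on at most $t-1$ of the positions in $J$. I would split this into two cases according to the first coordinate of $z$. If $z$ has first coordinate $B_{i_0}$, then $z \in C_{i_0}$ and its second coordinate comes from a polynomial $g \ne f$ (since $z \ne x$); a match at $j \in J$ forces the second coordinates to agree, which by the Reed--Solomon/MDS structure of $Y_{i_0}$---a nonzero polynomial $f-g$ of degree at most $t-1$ has at most $t-1$ ``roots'' once the coefficient-of-$X^{t-1}$ convention at $\alpha_j = \infty$ is taken into account---happens for at most $t-1$ indices $j$. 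If instead $z$ has first coordinate $B \ne B_{i_0}$, then a match at $j \in J$ forces $B_j = b_j \ne \infty$, so $B$ and $B_{i_0}$ are distinct codewords of $C$ agreeing at $j$ on a non-$\infty$ value; Property $P(t)$ (a codeword is determined by any $t$ of its non-$\infty$ components) says two distinct codewords can agree on at most $t-1$ such positions. Either way the count is at most $t-1$.

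Summing over $P'$, the positions of $J$ matched by $P'$ number at most $|P'|(t-1) \le c(t-1)$, by a union bound. On the other hand $x \in desc(P')$ forces every position of $J$ to be matched, so $|J| \le c(t-1)$. But the hypothesis $r \ge t$ gives $|J| \ge l-(t-1) = c(t-1) + (r-(t-1)) > c(t-1)$, a contradiction. Hence $x \in P'$, so $C'$ is $c$-frameproof; its length $l$ and cardinality $Mm^t$ are read off from the construction as already noted in the excerpt.

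I expect the main obstacle to be recognising that the two structurally different mechanisms controlling agreements---the algebraic MDS bound in the Reed--Solomon second coordinate and the combinatorial Property $P(t)$ in the first coordinate---deliver exactly the same numerical bound $t-1$, so that they merge into one uniform inequality rather than a two-stage ``first reduce to $C_{i_0}$, then apply Reed--Solomon'' argument. The naive two-stage reduction fails because $x$ may be matched at some positions of $J$ only by codewords whose first coordinate differs from $B_{i_0}$, so $x$ need not be a descendant of $P' \cap C_{i_0}$ alone; the uniform bound sidesteps this (and also removes any need to first prove $B_{i_0}\in P'$). The only other point requiring care is verifying that the parameter inequality $r \ge t$, equivalently $l-(t-1) > c(t-1)$, is precisely the slack that closes the count.
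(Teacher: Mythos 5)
Your proof is correct and is essentially the contrapositive of the paper's own argument: the paper uses pigeonhole on the $\ge c(t-1)+1$ non-$(\infty,\infty)$ positions of $x$ to find a single $y\in P$ agreeing with $x$ in at least $t$ such positions, and then applies Property $P(t)$ to the first coordinates (forcing $\pi_1(y)=B_{i_0}$) followed by the degree-$(t-1)$ polynomial bound to the second coordinates to conclude $y=x$ --- exactly the two cases of your uniform ``at most $t-1$ agreements'' bound, read in the other direction. There is no gap; the counting slack $l-(t-1)>c(t-1)$ coming from $r\ge t$ is used identically in both versions.
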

\begin{proof} Using the same notations and construction as above, it remains to show that $C'=\cup_{i=1}^MC_i$ is a $c$-frameproof code of length $l$ over
$(T\times\bbF_m)\cup \{(\infty,\infty)\}$.

For each word $x=((b_1,y_1),(b_2,y_2),\ldots,(b_l,y_l))\in C'$, let $\pi_k(x)$ be the word by mapping
each element to its $k$th coordinate, $k=1,2$, i.e., $\pi_1(x)=(b_1,b_2,\ldots,b_l)$ and $\pi_2(x)=(y_1,y_2,\ldots,y_l)$. Suppose $x\in C'$
and let $P\subset C'$ be such that $|P|\leq c$ and $x\in desc(P)$.
We will show that $x\in P$. Since $|P|\leq c$ and $r\geq t$, there
exists $y\in P$ that agrees with $x$ in $t$ or more components that are not equal to $(\infty,\infty)$. We aim to show $x=y$.

 Since $x\in C'$, there exists $i$ such that
$x\in C_i$. Then $\pi_1(x)=B_i$. Since $\pi_1(x)$ and $\pi_1(y)$
agree in $t$ or more components that are not equal to $\infty$,
$\pi_1(x)=\pi_1(y)=B_i$. That is $y\in C_i$. Thus $\pi_2(x)$ and
$\pi_2(y)$ are both in $Y_i$. Since $\pi_2(x)$ and $\pi_2(y)$
agree in $t$ or more components that are not equal to $\infty$,
$\pi_2(x)=\pi_2(y)$. Hence  $x=y\in P$ as required.
\end{proof}
\vskip 10pt

Let $\bbZ_p$ be the ring of integers modulo $p$. Here are two examples as applications of
Lemma~\ref{p2code}.

\vskip 10pt
\begin{example}
\label{c=2,l=4} Let $S=\{\infty\}\cup \bbZ_{2}$. Define four sets
$X_1$, $X_2$, $X_3$ and $X_4$ of words of length $4$ over $S$ as
follows:
\begin{equation*}
\begin{split}
&X_1=\{(\infty,i,i,i):i\in \bbZ_2\},\\ &X_2=\{(i,\infty,i,i+1):i\in
\bbZ_2\},\\&X_3=\{(i,i+1,\infty,i):i\in
\bbZ_2\},\\&X_4=\{(i,i,i+1,\infty):i\in \bbZ_2\}.
\end{split}
\end{equation*}
It is clear that the sets $X_i$ are pairwise disjoint and have
cardinality $2$. Let $C=\cup_{i=1}^4 X_i$, it is not difficult to
check that $C$ is a $3$-ary $2$-frameproof code of length $4$ over
$S$ with cardinality $8$. Furthermore, $C$ satisfies  Property
$P(2)$. Let $m\geq 3$ be any prime power and $q=2m+1$. By applying
Lemma~\ref{p2code}, there exists a $q$-ary $2$-frameproof code of
length $4$  of cardinality $8m^2=2(q-1)^2$.
\end{example}
\vskip 10pt

 Note: Example~\ref{c=2,l=4} shows that $M_{2,4}(q)\geq
2(q-1)^2$ for each $q=2m+1$ with $m\geq 3$ a prime power. In
\cite[Construction 3]{Blackburn2003SIAM}, Blackburn constructed a
$q$-ary $2$-frameproof code of length $4$ of cardinality
$2(q-1)^2(1-1/(2\sqrt{q-1}))$, where $q=m^2+1$ and $m\geq 5$ is a
prime power. In this case, Example~\ref{c=2,l=4} constructs
$2$-frameproof codes of length $4$ with bigger size for a more dense family of parameters $q$.

 \vskip 10pt
\begin{example}
\label{c=3,l=5}This is from \cite[Construction
$4$]{Blackburn2003SIAM}. Let $S=\{\infty\}\cup \bbZ_{3}$. Define five
sets $X_1$, $X_2$, $X_3$, $X_4$ and $X_5$ of words of length $5$
over $S$ as follows:
\begin{equation*}
\begin{split}
&X_1=\{(\infty,i,i,i,i):i\in \bbZ_3\},\\
&X_2=\{(i,\infty,i,i+1,i+2):i\in
\bbZ_3\},\\&X_3=\{(i,i,\infty,i+2,i+1):i\in
\bbZ_3\},\\&X_4=\{(i,i+1,i+2,\infty,i):i\in \bbZ_3\},\\
&X_5=\{(i,i+2,i+1,i,\infty):i\in \bbZ_3\}.
\end{split}
\end{equation*}
It is easy to see that the sets $X_i$ are pairwise disjoint and
have cardinality $3$. Let $C=\cup_{i=1}^5 X_i$, which forms a
$4$-ary $3$-frameproof code of length $5$ over $S$ with
cardinality $15$. Clearly $C$ satisfies Property $P(2)$. Let
$m\geq 4$ be a prime power and $q=3m+1$. By applying
Lemma~\ref{p2code}, there exists a $q$-ary $3$-frameproof code of
length $5$  of cardinality $15m^2=\frac{5}{3}(q-1)^2$.
\end{example}

\vskip 10pt

 Before the end of this section, we show that the resultant codes obtained from
Lemma~\ref{p2code} also satisfy Property $P(t)$, which means that
Lemma~\ref{p2code} can be applied recursively.

\vskip 10pt
\begin{lemma}
\label{p} Any frameproof code obtained from Lemma~\ref{p2code}
satisfies Property $P(t)$ with the same $t$ of the previous code.
Furthermore, the code is still $c$-frameproof after joining the
all $(\infty,\infty)$ codeword.
\end{lemma}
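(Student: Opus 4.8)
The plan is to establish the two claims in turn, taking the special element of the output code $C'$ to be $(\infty,\infty)$ and writing $w=((\infty,\infty),\ldots,(\infty,\infty))$ for the all-$(\infty,\infty)$ word of length $l$. The one fact that drives everything is the equivalence $(b_j,y_j)=(\infty,\infty)\Leftrightarrow b_j=\infty$: indeed $b_j=\infty$ forces $y_j=\infty$ by the definition of $Y_i$, while $b_j\neq\infty$ gives $y_j\in\bbF_m$. Consequently a codeword $x=((b_1,y_1),\ldots,(b_l,y_l))\in C_i$ has exactly as many coordinates equal to $(\infty,\infty)$ as $B_i$ has coordinates equal to $\infty$, namely at most $t-1$. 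For the uniqueness half of Property $P(t)$, suppose $t$ coordinates of $x$ different from $(\infty,\infty)$ are specified, at positions $j_1,\ldots,j_t$. By the equivalence above their first coordinates are $t$ coordinates of $B_i=\pi_1(x)$ different from $\infty$, which determine $B_i$ (and hence $i$) by Property $P(t)$ of $C$; their second coordinates are $t$ coordinates of $\pi_2(x)\in Y_i$ different from $\infty$, which determine $\pi_2(x)$ by the interpolation property already used to build $Y_i$. Hence $x$ is determined, so $C'$ satisfies Property $P(t)$ with the same $t$.

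For the frameproof claim, fix $P\subseteq C''=C'\cup\{w\}$ with $|P|\le c$ and $x\in C''$ with $x\in desc(P)$; I must show $x\in P$. Suppose first $w\notin P$, so $P\subseteq C'$. If $x\neq w$ then $x\in C'$ and $x\in P$ by the frameproof property of $C'$ from Lemma~\ref{p2code}. The remaining possibility $x=w$ cannot occur: each member of $P$ has at most $t-1$ coordinates equal to $(\infty,\infty)$, so the members of $P$ jointly cover at most $c(t-1)$ coordinates, and since $l=c(t-1)+r$ with $r\ge t\ge 1$ we have $c(t-1)<l$, too few coordinates for $w$ to lie in $desc(P)$.

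Now suppose $w\in P$ and put $P'=P\setminus\{w\}\subseteq C'$, so $|P'|\le c-1$. If $x=w$ we are done, so assume $x\in C'$. Then $x$ has at least $l-(t-1)$ coordinates different from $(\infty,\infty)$, and $w$ agrees with $x$ only in the coordinates where $x$ equals $(\infty,\infty)$; hence each of these at least $l-(t-1)$ coordinates is matched by some member of $P'$. Since $r\ge t$ gives $l-(t-1)=(c-1)(t-1)+r\ge c(t-1)+1$, the same pigeonhole count as in Lemma~\ref{p2code}, now applied to the at most $c-1$ words of $P'$, yields a $y\in P'$ agreeing with $x$ in at least $t$ coordinates different from $(\infty,\infty)$. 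Property $P(t)$ of $C'$, established above, then forces $x=y\in P'\subseteq P$. The main obstacle is exactly this last step: one must verify that the ``free'' word $w$ can never contribute to framing a genuine codeword, because it matches only $(\infty,\infty)$ coordinates, and that after discarding it the slack $r\ge t$ still leaves enough matched coordinates to pin $x$ down to a single member of $P'$.
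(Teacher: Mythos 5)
Your proof is correct and follows essentially the same route as the paper: the same counting argument showing the all-$(\infty,\infty)$ word cannot lie in $desc(P)$ for $|P|\le c$, and the same pigeonhole step (at least $(c-1)(t-1)+r$ non-$(\infty,\infty)$ coordinates of $x$ must be matched by the at most $c-1$ words of $P\setminus\{w\}$, forcing some $y$ to agree with $x$ in $t$ such coordinates, whence $x=y$ by Property $P(t)$). The only difference is that you spell out the verification of Property $P(t)$ for $C'$, which the paper declares ``straightforward'' and omits; your version of that verification is sound.
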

\begin{proof} We use the same notations as in Lemma~\ref{p2code}.
 The proof that $C'$ satisfies Property $P(t)$ with the special element $(\infty,\infty)$ is a straightforward verification by the construction and omitted.

Let $C_{\infty}$ be the all $(\infty,\infty)$ codeword. First, we
prove that $C_{\infty}$ is not in the descendant of any set
$P\subset C'$ with $|P|\leq c$. In fact, each codeword in $C'$
contains at most $t-1$ components with $(\infty,\infty)$. Hence
there are at most $c(t-1)$ $(\infty,\infty)$'s  contained in any
set $P\subset C'$ with $|P|\leq c$, but there are
$l=c(t-1)+r>c(t-1)$ $(\infty,\infty)$'s in $C_{\infty}$. Second,
suppose $x\in C'$ and let $P\subset C'$ be such that $|P|\leq c-1$
and $x\in desc(P\cup \{C_{\infty}\})$. We will show that $x\in P$.
Since $x$ has at least $(c-1)(t-1)+r$ components that are not equal to
$(\infty,\infty)$,
 there exists $y\in P$ that agrees with
$x$ in $t$ or more components that are not equal to $(\infty,\infty)$. By
the Property $P(t)$ of $C'$, $x=y$ as required. This completes the
proof.
\end{proof}
\vskip 10pt

\section{ $c=2$ and $3$}
In this section, we establish the existence of two infinite
families of $c$-frameproof codes of length $c+2$ with cardinality
$\frac{c+2}{c}(q-1)^2+1$ with $c=2$ and $3$.
\subsection{$c=2$ and $l=4$}
\vskip 10pt
\begin{lemma}
\label{c=2,l=4,m=5} There exists a $5$-ary $2$-frameproof code
with length $4$ of cardinality $32$ satisfying  Property $P(2)$.
\end{lemma}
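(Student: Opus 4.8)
The plan is to give an explicit construction, because Lemma~\ref{p2code} cannot produce such a code by recursion: to reach $q=5$ from a smaller alphabet one would need $(s-1)m=4$ with $m$ a prime power and $m\ge l-1=3$, which forces $m=4$ and hence $s=2$, a degenerate binary alphabet leaving no room for a non-$\infty$ letter. So the case $q=5$ must be settled by hand. I would exhibit $32$ codewords of length $4$ over $S=\{\infty\}\cup\bbF_4$ directly, in the same style as Examples~\ref{c=2,l=4} and~\ref{c=3,l=5}: arrange them into four blocks $X_1,\dots,X_4$, where every codeword of $X_j$ carries its unique $\infty$ in coordinate $j$ while the other three coordinates are filled by affine expressions over $\bbF_4$ in a parameter ranging over a set of size $8$. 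Every codeword then has exactly one $\infty$ (so the first clause of Property~$P(2)$ holds by construction), and $|X_1|+\cdots+|X_4|=32$.

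The substance of the verification is Property~$P(2)$, namely that no two distinct codewords agree in two coordinates on which both are non-$\infty$. Within a block $X_j$ this means choosing the three affine expressions so that the resulting length-$3$ words over $\bbF_4$ lie pairwise at Hamming distance at least $2$. Across two blocks $X_j$ and $X_{j'}$ the two codewords are simultaneously non-$\infty$ only on the two coordinates of $[4]\setminus\{j,j'\}$, so I must ensure their restrictions to this pair never coincide in both entries. Checking these conditions is finite, but the entries must be taken over the field $\bbF_4$ (not the ring $\bbZ_4$) and the affine offsets tuned per block so that every such overlap is controlled; this packing is the real content. In particular one \emph{cannot} take a maximal $16$-word MDS code on the non-$\infty$ letters and simply append single-$\infty$ words, since a single-parity-check $[3,2,2]_4$ code has covering radius $1$, forcing every candidate single-$\infty$ word to collide with some MDS word in two coordinates; this is why all $32$ codewords must carry an $\infty$. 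Designing the four blocks is the step I expect to be the main obstacle.

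Once Property~$P(2)$ is established, the $2$-frameproof property is essentially automatic for length $4$. Suppose $w\in desc(\{u,v\})$ with $u,v,w\in C$ distinct, and partition $[4]$ into the set $A$ where $w_i=u_i$ and the set $B$ where $w_i=v_i$, so that $|A|+|B|=4$ and $\max(|A|,|B|)\ge 2$. If some block, say $A$, has size at least $3$, then since $w$ has at most one $\infty$ it contains at least two coordinates where $w$ is non-$\infty$, and there $w$ agrees with $u$, so Property~$P(2)$ forces $w=u$. If instead $|A|=|B|=2$, the single possible $\infty$ of $w$ lies in only one block, so the other block consists of two non-$\infty$ coordinates on which $w$ agrees with its parent, again forcing equality by Property~$P(2)$. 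Either way we reach a contradiction, so $desc(P)\cap C=P$ for every $P\subseteq C$ with $|P|\le 2$, and the constructed $C$ is the desired $5$-ary $2$-frameproof code of length $4$ and cardinality $32$ satisfying Property~$P(2)$.
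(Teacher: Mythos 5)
Your reduction of the $2$-frameproof property to Property $P(2)$ is sound, and your observation that Lemma~\ref{p2code} cannot reach $q=5$ directly (since $(s-1)m=4$ with $m\ge 3$ forces the degenerate $s=2$) is correct. But the proposal never actually produces the code: you describe the intended shape of the four blocks and then state that ``designing the four blocks is the step I expect to be the main obstacle.'' For a pure existence lemma whose entire content is the exhibition of a specific $32$-word code, this is the proof, and it is missing. Moreover, the parameterization you sketch --- ``affine expressions over $\bbF_4$ in a parameter ranging over a set of size $8$'' --- is internally strained: an affine function of a single $\bbF_4$-parameter takes only $4$ values, so the parameter must really be a pair, and it is not evident that any tuning of offsets over $\bbF_4$ alone yields the required packing.

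The paper resolves exactly this obstacle by taking the alphabet to be the product $(\bbZ_2\times\bbF_2)\cup\{\infty\}$ and lifting the $3$-ary code of Example~\ref{c=2,l=4}: each block is
$X_1=\{(\infty,(i,f(0)),(i,f(1)),(i,f_\infty))\}$, $X_2=\{((i,f(0)),\infty,(i,f(1)),(i+1,f_\infty))\}$, etc., with $i\in\bbZ_2$ and $f\in\bbF_2[X]$ of degree at most $1$. This is precisely the Lemma~\ref{p2code} construction run with $m=2$, which is legitimate here even though $m<l-1$: each base codeword has only three non-$\infty$ coordinates, so only three evaluation points are needed, and $\bbF_2\cup\{\infty\}$ supplies exactly three. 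The first coordinate (inherited from Example~\ref{c=2,l=4}) kills cross-block agreements, and the second coordinate (two evaluations or one evaluation plus the leading coefficient determine a degree-$\le 1$ polynomial) kills within-block agreements. Your write-up identifies why the lemma cannot be outsourced to the general recursion but stops short of the construction that the recursion, suitably bent, actually provides.
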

\begin{proof} We will construct the $2$-frameproof code $C$ of length
$4$ over $(\bbZ_2\times \bbF_2)\cup \{\infty\}$. For each polynomial
$f\in \bbF_2[X]$, let $f_{\infty}$ denote the coefficient of $X$
in $f$. First, define four sets of words as follows:
\begin{equation*}
\begin{split}
X_1=\{&(\infty,(i,f(0)),(i,f(1)),(i,f_{\infty})):\\
& i\in \bbZ_2,f\in\bbF_2[X], deg f\leq 1 \},\\
X_2=\{&((i,f(0)),\infty,(i,f(1)),(i+1,f_{\infty})):\\
&i\in \bbZ_2,f\in\bbF_2[X], deg f\leq 1
\},\\X_3=\{&((i,f(0)),(i+1,f(1)),\infty,(i,f_{\infty})):\\
& i\in \bbZ_2,f\in\bbF_2[X], deg f\leq 1
\},\\X_4=\{&((i,f(0)),(i,f(1)),(i+1,f_{\infty}),\infty):\\
& i\in \bbZ_2,f\in\bbF_2[X], deg f\leq 1 \}.
\end{split}
\end{equation*}
It is clear that the set $X_i$ are pairwise disjoint and have
cardinality $8$. Let $C=\cup_{i=1}^4 X_i$, then $C$ is easily seen to be
a $5$-ary $2$-frameproof code of length $4$ with cardinality $32$ satisfying  Property $P(2)$.
\end{proof}
 \vskip 10pt

By applying Lemma~\ref{p2code}, we establish the following
existence result for $2$-frameproof codes.
 \vskip 10pt

\begin{theorem}
\label{c=2,l=4,m} There exists a $q$-ary $2$-frameproof code with
length $4$ of cardinality $2(q-1)^2+1$ for any odd $q>1$.
\end{theorem}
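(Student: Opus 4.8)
The plan is to reduce the target cardinality $2(q-1)^2+1$ to a factor of $2(q-1)^2$ coming from the recursive machinery of Lemma~\ref{p2code}, together with a single extra all-$\infty$ codeword supplied by Lemma~\ref{p}. Two base codes are available as seeds: the $3$-ary code of Example~\ref{c=2,l=4} (cardinality $8=2(3-1)^2$) and the $5$-ary code of Lemma~\ref{c=2,l=4,m=5} (cardinality $32=2(5-1)^2$), both of length $4$ and both satisfying Property $P(2)$. Here $c=2$, $t=2$, $l=4$, so that $l=c(t-1)+r$ with $r=2\in\{t,\ldots,c\}$, and the hypotheses $m\geq l-1=3$ and $2t-1\leq l$ of Lemma~\ref{p2code} reduce to requiring the multiplier $m$ to be a prime power with $m\geq3$. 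Since $t=2$, each application of Lemma~\ref{p2code} with multiplier $m$ sends an $s$-ary $P(2)$ code of cardinality $M$ to a $((s-1)m+1)$-ary code of cardinality $Mm^2$, and by Lemma~\ref{p} the output again satisfies $P(2)$, so applications may be chained. Starting from a seed with $s_0\in\{3,5\}$ and chaining prime-power multipliers $m_1,\ldots,m_k$ therefore yields a $q$-ary $P(2)$ code of length $4$ with $q-1=(s_0-1)\prod_i m_i$ and cardinality $2(s_0-1)^2\prod_i m_i^2=2(q-1)^2$.

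It remains to show that every odd $q>1$ is reachable, i.e.\ that $q-1$ can be written as $(s_0-1)\prod_i m_i$ with $s_0\in\{3,5\}$ and each $m_i$ a prime power at least $3$. Write $q-1=2^a u$ with $u$ odd and $a\geq1$ (possible since $q$ is odd). I distinguish three cases according to $a$. If $a=1$, take the seed $s_0=3$ and require $\prod_i m_i=u$; when $u=1$ this is the seed itself ($q=3$), and when $u\geq3$ the odd integer $u$ factors into odd prime powers, each at least $3$. If $a=2$, take $s_0=5$ and require $\prod_i m_i=u$; again $u=1$ gives the seed ($q=5$) and $u\geq3$ factors into odd prime powers. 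If $a\geq3$, take $s_0=3$ and require $\prod_i m_i=2^{a-1}u$; since $a-1\geq2$ the factor $2^{a-1}\geq4$ is itself a prime power, and the odd part $u$ factors into odd prime powers, so all factors are prime powers at least $3$. These three cases exhaust all odd $q>1$.

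Finally, in each case Lemma~\ref{p} furnishes the additional all-$(\infty,\infty)$ codeword while preserving the $2$-frameproof property, giving cardinality $2(q-1)^2+1$. For the two seed cases $q=3,5$ (where no multiplier is applied) the same conclusion holds because the argument in the second part of Lemma~\ref{p} uses only Property $P(2)$ and the inequality $l=4>c(t-1)=2$, both of which hold for the seed codes; thus the all-$\infty$ word can be adjoined directly. The main obstacle is the number-theoretic bookkeeping in the middle paragraph: a single factor of $2$ (the case $a=1$) cannot be produced from prime powers that are all at least $3$, which is exactly why the seed with $s_0-1=2$ must supply it, while $a=2$ forces the use of the $s_0-1=4$ seed, and any surplus powers of $2$ (when $a\geq3$) must be absorbed into the single prime-power multiplier $2^{a-1}$. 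Verifying that these choices always yield admissible multipliers at least $3$ is the crux of the argument; everything else is routine.
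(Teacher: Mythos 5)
Your proof is correct and follows essentially the same route as the paper: the same two seeds (Example~\ref{c=2,l=4} and Lemma~\ref{c=2,l=4,m=5}), the same recursive application of Lemma~\ref{p2code} with Property $P(2)$ preserved via Lemma~\ref{p}, and the same decomposition of $q-1$ into a seed factor $s_0-1\in\{2,4\}$ times prime powers at least $3$ (with $2^{a-1}$ kept as a single multiplier when $a\geq 3$). The only difference is presentational: the paper organizes the factorization as an induction on odd $q$, peeling off one odd prime-power factor at a time, whereas you unroll it into an explicit case analysis on the power of $2$ dividing $q-1$; your remark justifying the adjunction of the all-$\infty$ word to the seed codes themselves is in fact slightly more careful than the paper's direct appeal to Lemma~\ref{p}, which is stated only for codes produced by Lemma~\ref{p2code}.
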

\begin{proof}  By Lemma~\ref{p}, it is sufficient to prove that for each odd $q>1$, there exists a $q$-ary $2$-frameproof code with
length $4$ of cardinality $2(q-1)^2$ satisfying Property $P(2)$.

For $q=3,5$, the conclusion is true by Example~\ref{c=2,l=4} and
Lemma~\ref{c=2,l=4,m=5}. Assume it is true for all odd integers
less than $2m+1$, $m\geq 3$, i.e., there exists a $q$-ary
$2$-frameproof code with length $4$ of cardinality $2(q-1)^2$
satisfying Property $P(2)$ for any odd $q<2m+1$. The proof proceeds by induction. If $m$ is a prime
power, then by Lemma~\ref{p2code} and Example~\ref{c=2,l=4}, such
a code exists for $q=2m+1$. If $m$ is not a prime power, write $m$ as
$m=p_1^{e_1}p_2^{e_2}\cdots p_s^{e_s}$.  There exists at least one
$i$, such that $p_i^{e_i}\geq 3$ is odd. Since
$2m/p_i^{e_i}+1<2m+1$ is odd, the code exists for
$q=2m/p_i^{e_i}+1$ by assumption. Then by Lemma~\ref{p2code}, the code exists
for $q=2m+1=((2m/p_i^{e_i}+1)-1)p_i^{e_i}+1$ as required.
\end{proof}
\vskip 10pt

\subsection{$c=3$ and $l=5$}
\vskip 10pt
\begin{lemma}
\label{c=3,l=5,m=10} There exists a $10$-ary $3$-frameproof code
with length $5$ of cardinality $135$ satisfying  Property $P(2)$.
\end{lemma}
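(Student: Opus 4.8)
The plan is to construct $C$ directly, in exact parallel with the proof of Lemma~\ref{c=2,l=4,m=5}, by overlaying the $3$-frameproof base code of Example~\ref{c=3,l=5} in a first coordinate with a degree-$\leq 1$ polynomial layer in a second coordinate. I would work over the alphabet $(\bbZ_3\times\bbF_3)\cup\{\infty\}$, which has $9+1=10$ elements, so the resulting code is automatically $10$-ary. For $f\in\bbF_3[X]$ with $\deg f\leq 1$, write $f_{\infty}$ for the coefficient of $X$. The first coordinate of each codeword traces out a codeword of the Example~\ref{c=3,l=5} base code, and the four non-$\infty$ positions of that base codeword carry, in increasing order of position index, the values $f(0),f(1),f(2),f_{\infty}$ in the second coordinate, reading $f_{\infty}$ as the evaluation of $f$ ``at $\infty$''.

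Concretely, as $i$ ranges over $\bbZ_3$ and $f$ over the degree-$\leq 1$ polynomials in $\bbF_3[X]$, I would let $X_1,\dots,X_5$ consist of the words
\begin{equation*}
\begin{split}
&(\infty,(i,f(0)),(i,f(1)),(i,f(2)),(i,f_{\infty})),\\
&((i,f(0)),\infty,(i,f(1)),(i+1,f(2)),(i+2,f_{\infty})),\\
&((i,f(0)),(i,f(1)),\infty,(i+2,f(2)),(i+1,f_{\infty})),\\
&((i,f(0)),(i+1,f(1)),(i+2,f(2)),\infty,(i,f_{\infty})),\\
&((i,f(0)),(i+2,f(1)),(i+1,f(2)),(i,f_{\infty}),\infty),
\end{split}
\end{equation*}
respectively. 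Each $X_k$ has $|\bbZ_3|\cdot 3^2=3\cdot 9=27$ words, and since the single $\infty$ sits in position $k$, the five sets are pairwise disjoint; hence $C=\cup_{k=1}^5 X_k$ has cardinality $135$. For Property $P(2)$, every codeword has exactly one $\infty$, hence at most $t-1=1$, and specifying two non-$\infty$ components determines the word: their first coordinates pin down the base codeword (and thereby $k$ and $i$) via the $P(2)$ property of the base code, while their second coordinates are two evaluations of $f$ at distinct points of $\bbF_3\cup\{\infty\}$, which determine the degree-$\leq 1$ polynomial uniquely.

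The crux is $3$-frameproofness, and here the argument mirrors the proof of Lemma~\ref{p2code}. Suppose $x\in C$ with $x\in desc(P)$, $P\subset C$ and $|P|\leq 3$. Since $x$ has exactly four non-$\infty$ positions and $|P|\leq 3$, pigeonhole forces some $y\in P$ to agree with $x$ in at least $\lceil 4/3\rceil=2$ non-$\infty$ positions. Projecting onto the first coordinate, $\pi_1(x)$ and $\pi_1(y)$ are base codewords agreeing in two non-$\infty$ positions, so $P(2)$ of the base code gives $\pi_1(x)=\pi_1(y)$; thus $x$ and $y$ lie in the same $X_k$ with the same $i$. Projecting onto the second coordinate then yields two agreements among $f(0),f(1),f(2),f_{\infty}$ at distinct points, forcing the two underlying polynomials to coincide and hence $x=y\in P$.

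The subtlety worth flagging is that this is essentially the recursion of Lemma~\ref{p2code} applied to the Example~\ref{c=3,l=5} code with $m=3$, which violates the hypothesis $m\geq l-1=4$ of that lemma (in general one would need four distinct finite points, but $\bbF_3$ has only three). The construction nonetheless succeeds precisely because every base codeword carries exactly one $\infty$, so only $l-1=4$ evaluation points are ever required, and $\bbF_3\cup\{\infty\}=\{0,1,2,\infty\}$ supplies exactly four. The one place demanding care is verifying that using the ``point at infinity'' $f_{\infty}$ as a genuine evaluation still lets two agreements determine a degree-$\leq 1$ polynomial; this is the standard projective Reed--Solomon fact, and it is what makes the pigeonhole-plus-two-layers argument close. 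Once Lemma~\ref{c=3,l=5,m=10} is established, Lemma~\ref{p} shows the code may seed the recursion of Lemma~\ref{p2code} to build the desired infinite family for $c=3$.
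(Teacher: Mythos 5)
Your construction is word-for-word the same code the paper writes down in its proof of this lemma (the same five families of words over $(\bbZ_3\times\bbF_3)\cup\{\infty\}$), and your verification of the cardinality, Property $P(2)$, and $3$-frameproofness correctly fills in the details the paper dismisses as ``easy to check.'' Your observation that this is the Lemma~\ref{p2code} recursion pushed to $m=3<l-1$, rescued by the fact that each base codeword has exactly one $\infty$ so that the four points of $\bbF_3\cup\{\infty\}$ suffice, is accurate and is precisely why the paper states this case as a separate lemma rather than as a corollary of Lemma~\ref{p2code}.
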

\begin{proof}
 For each polynomial $f\in \bbF_3[X]$, let $f_{\infty}$ denote
the coefficient of $X$ in $f$. Now we define $C$ consisting of the
following five types of codewords over $(\bbZ_3\times \bbF_3)\cup
\{\infty\}$ with $i\in \bbZ_3,f\in\bbF_3[X]$ and $\deg f\leq 1$:
\begin{equation*}
\begin{split}
&(\infty,(i,f(0)),(i,f(1)),(i,f(2)),(i,f_{\infty})),\\
&((i,f(0)),\infty,(i,f(1)),(i+1,f(2)),(i+2,f_{\infty})),\\
&((i,f(0)),(i,f(1)),\infty,(i+2,f(2)),(i+1,f_{\infty})),\\
&((i,f(0)),(i+1,f(1)),(i+2,f(2)),\infty,(i,f_{\infty})),\\
&((i,f(0)),(i+2,f(1)),(i+1,f(2)),(i,f_{\infty}),\infty).
\end{split}
\end{equation*}
It is easy to check that $C$ is a $10$-ary $3$-frameproof code of
length $5$ with cardinality $135$ satisfying  Property $P(2)$.
\end{proof}
\vskip 10pt

Similar to the proof of Theorem~\ref{c=2,l=4,m}, we obtain the
following existence result for $3$-frameproof codes by induction.

\vskip 10pt
\begin{theorem}
\label{c=3,l=5,m} There exists a $q$-ary $3$-frameproof code with
length $5$ of cardinality $\frac{5}{3}(q-1)^2+1$ for any integer
$q\equiv 4\pmod{6}$.
\end{theorem}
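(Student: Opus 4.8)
The plan is to mirror exactly the inductive strategy used in the proof of Theorem~\ref{c=2,l=4,m}, replacing the role of the base cases and the multiplier. By Lemma~\ref{p}, it suffices to construct, for every $q\equiv 4\pmod 6$, a $q$-ary $3$-frameproof code of length $5$ and cardinality $\frac{5}{3}(q-1)^2$ satisfying Property $P(2)$; adjoining the all-$(\infty,\infty)$ codeword then yields the required size $\frac{5}{3}(q-1)^2+1$. Here the relevant instance of Lemma~\ref{p2code} has $c=3$, $l=5$, $t=2$, so that $l=c(t-1)+r=3+2$ with $r=2\in\{2,3\}$ as required, and the alphabet sizes transform by $q=(s-1)m+1$ with $m\geq l-1=4$.

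First I would fix the arithmetic of the induction variable. Writing $q=3m+1$, the congruence $q\equiv 4\pmod 6$ is equivalent to $m$ being odd. The two seeds are $q=4$ (that is $m=1$), supplied by Example~\ref{c=3,l=5}, and $q=10$ (that is $m=3$), supplied by Lemma~\ref{c=3,l=5,m=10}; both give codes of the right size $\frac{5}{3}(q-1)^2$ with Property $P(2)$. For the inductive step I would assume the statement for all admissible values below $q=3m+1$ with $m$ odd, $m\geq 3$, and split on whether $m$ is a prime power. If $m$ is an odd prime power, then since $m\geq 4$ fails (as $m=3$ is already a base case, one takes $m\geq 5$), Lemma~\ref{p2code} applied to the known $10$-ary code of Lemma~\ref{c=3,l=5,m=10}, or more cleanly to any previously constructed smaller code with $s-1$ dividing appropriately, produces the $q$-ary code. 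If $m$ is not a prime power, write $m=p_1^{e_1}\cdots p_s^{e_s}$; since $m$ is odd, every $p_i$ is odd, so picking any factor $p_i^{e_i}\geq 3$ the value $q'=3(m/p_i^{e_i})+1$ is smaller, satisfies $q'\equiv 4\pmod 6$ (because $m/p_i^{e_i}$ is again odd), and a code exists for it by the induction hypothesis; then Lemma~\ref{p2code} with the prime power $p_i^{e_i}$ lifts it to $q=3m+1=((q'-1)p_i^{e_i})+1$.

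The one point demanding care, and the likely main obstacle, is verifying that the hypotheses of Lemma~\ref{p2code} are genuinely met at each application, in particular the constraint $m\geq l-1=4$ on the prime power used as the lifting modulus and the compatibility $q=(s-1)m+1$ linking the source alphabet size $s$ to the target $q$. In the composite case the lifting modulus is $p_i^{e_i}$, which could be as small as $3<4$, so one must check that the base field $\bbF_{p_i^{e_i}}$ still supports the polynomial evaluation construction for $l=5$ distinct evaluation points drawn from $\bbF_{p_i^{e_i}}\cup\{\infty\}$; this forces $p_i^{e_i}\geq 4$, i.e. one cannot use the factor $3$, so I would argue that a composite odd $m$ with no prime-power factor exceeding $3$ does not arise (an odd composite has a factor that is an odd prime power $\geq 9$, or is a product forcing some $p_i^{e_i}\geq 5$), and handle the residual small cases directly against the two seeds. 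Once this bookkeeping is settled the induction closes exactly as in Theorem~\ref{c=2,l=4,m}, and the final $+1$ comes from Lemma~\ref{p}.
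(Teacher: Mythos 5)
Your proposal is correct and follows essentially the same inductive strategy as the paper's proof: the same reduction via Lemma~\ref{p}, the same base cases $q=4,10$, the same split on whether the odd cofactor is a prime power, and the same observation that an odd non-prime-power always has a prime-power factor $\geq 5$, so the constraint $m\geq l-1=4$ in Lemma~\ref{p2code} is never violated. The one slip is in the prime-power branch, where the lift must be applied to the $4$-ary seed of Example~\ref{c=3,l=5} (giving $q=(4-1)m+1=3m+1$) rather than to the $10$-ary code of Lemma~\ref{c=3,l=5,m=10} (which would give $q=9m+1$); your alternative phrasing about choosing a previously constructed code with $s-1$ dividing appropriately does cover the correct choice.
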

\begin{proof} By Lemma~\ref{p}, it is sufficient to prove that for each $q\equiv 4\pmod{6}$,  there exists a $q$-ary $3$-frameproof code with
length $5$ of cardinality $\frac{5}{3}(q-1)^2$ satisfying Property
$P(2)$.

For $q=4,10$, the above statement is true by Example~\ref{c=3,l=5} and
Lemma~\ref{c=3,l=5,m=10}. Assume it is true for all integers
$q\equiv 4\pmod{6}$ less than $6m+4=3(2m+1)+1$, $m\geq 2$, i.e.,
there exists a $q$-ary $3$-frameproof code with length $5$ of
cardinality $\frac{5}{3}(q-1)^2$ satisfying Property $P(2)$ for
any integer $q\equiv 4\pmod{6}$ less than $3(2m+1)+1$. If $2m+1$
is a prime power, then by Lemma~\ref{p2code} and
Example~\ref{c=3,l=5}, such a code exists when $q=3(2m+1)+1$. If $2m+1$ is
not a prime power, assume $2m+1=p_1^{e_1}p_2^{e_2}\cdots
p_s^{e_s}$. Since $2m+1\geq 5$ is odd, there exists at least one
$i$, such that $p_i^{e_i}\geq 5$ is odd. Since
$3(2m+1)/p_i^{e_i}+1\equiv 4\pmod{6}$ is less than $3(2m+1)+1$, the code exists for $q=3(2m+1)/p_i^{e_i}+1$ by assumption. Then by Lemma~\ref{p2code},
the conclusion is true for $q=3(2m+1)+1=((3(2m+1)/p_i^{e_i}+1)-1)p_i^{e_i}+1$
as required.
\end{proof}
\vskip 10pt

\section{Determination of $R_{c,c+2}$}
Having demonstrated in Section III, that $R_{c,c+2}=\frac{c+2}{c}$ when $c=2,3$, we now pursue the determination of $R_{c,c+2}$ for general $c$. We begin by introducing the definition of orthogonal arrays.

An {\em orthogonal array} of size $N$, with $k$ constraints (or of
degree $k$), $s$ levels (or of order $s$), and strength $t$,
denoted by $OA(N,k,s,t)$, is a $k \times N$ array with entries
from a set of $s \geq 2$ symbols, having the property that in
every $t\times N$ submatrix, every $t\times1$ column vector
appears the same number $\lambda = \frac{N}{s^t}$ of times. The
parameter $\lambda$ is the index of the orthogonal array. An
$OA(N,k,s,t)$ is also denoted by $OA_{\lambda}(t,k,s)$. If $t$ is
omitted, it is understood to be $2$. If $\lambda$ is omitted, it
is understood to be $1$.

 Orthogonal arrays are well known  used to give codes of high minimum distance.
It was proved in \cite{StaddonIEEE2001,Chor1994AC} that codes with high minimum distance are frameproof codes with some parameters.
To make the paper self-contained, we prove the following result from
orthogonal arrays.

\vskip 10pt
\begin{lemma}
\label{oa2code} If there exists an $OA(t,l,s)$, then there exists
an $s$-ary length $l$ $c$-frameproof code of cardinality $s^t$,
where $c$ is any integer such that $l>c(t-1)$.
\end{lemma}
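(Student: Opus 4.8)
The plan is to take the $N=s^t$ columns of the given $OA(t,l,s)$ as the codewords of an $s$-ary code $C$ of length $l$. Since in the notation $OA(N,k,s,t)$ the array is $k\times N$ and here $k=l$, each column is a word in $F^l$; since the index is $\lambda=1$ there are exactly $N=\lambda s^t=s^t$ columns, so $|C|=s^t$ as required. Everything then reduces to extracting the correct distance property of $C$ from the strength-$t$ hypothesis and feeding it into a pigeonhole argument.

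The first key step is to show that any two distinct codewords of $C$ agree in at most $t-1$ coordinates. Suppose to the contrary that two columns agreed in $t$ positions, say in rows $i_1<\cdots<i_t$. Restricting the array to these $t$ rows yields a $t\times s^t$ submatrix in which, by the definition of strength $t$ with index $\lambda=N/s^t=1$, every column vector in $F^t$ occurs exactly once. Hence no two columns of the submatrix can coincide, contradicting the assumption. Therefore distinct codewords share at most $t-1$ coordinates; equivalently, $C$ has minimum Hamming distance at least $l-(t-1)$.

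The second step is the frameproof verification. Let $P\subseteq C$ with $|P|\le c$ and let $x\in desc(P)\cap C$; I must show $x\in P$. For each coordinate $j\in[l]$ there is, by the definition of $desc$, some $y\in P$ agreeing with $x$ in coordinate $j$, so the $l$ coordinates are covered by the at most $c$ words of $P$. By pigeonhole some $y\in P$ agrees with $x$ in at least $\lceil l/|P|\rceil\ge l/c$ coordinates. The hypothesis $l>c(t-1)$ gives $l/c>t-1$, and since the number of agreements is an integer exceeding $t-1$, it is at least $t$. By the first step this forces $x=y\in P$. Since the reverse inclusion $P\subseteq desc(P)\cap C$ is immediate, we conclude $desc(P)\cap C=P$, so $C$ is $c$-frameproof.

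The argument is short, and the only points requiring care are bookkeeping. One must invoke $\lambda=1$ (the content of the convention that an omitted index equals one) so that the columns of the $t$-row submatrix are genuinely distinct, and one must make the pigeonhole count deliver at least $t$ rather than merely $t-1$ agreements, which is precisely where the strict inequality $l>c(t-1)$ is used. I do not expect a deeper obstacle: the strength-$t$ to minimum-distance correspondence and the coverage pigeonhole do all the work.
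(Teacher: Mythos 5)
Your proposal is correct and follows essentially the same route as the paper: take the columns of the array as codewords, use pigeonhole with $l>c(t-1)$ to find a $y\in P$ agreeing with $x$ in at least $t$ positions, and conclude $x=y$ from the strength-$t$, index-$1$ property. You merely make explicit two points the paper leaves implicit (that $\lambda=1$ forces distinct columns to agree in at most $t-1$ coordinates, and the exact pigeonhole count), which is fine.
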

\begin{proof}  Suppose the given
$OA(t,l,s)$ is an $l\times s^t$ array with entries from set $S$ of
size $s$. Let $C$ be the collection of words formed by all the
columns of the array. Now we prove $C$ is $c$-frameproof for any
$c$ such that $l>c(t-1)$. Let $P$ be any subset
of $C$ with $|P|\leq c$. For any vector $x\in desc(P)\cap C$, each
component of $x$ must agree with the corresponding component of
one of the codewords in $P$. Since $|P|\leq c$, there is a
codeword $y\in P$ that agrees $x$ in at least $t$ positions. Thus
$x=y$ from the definition of orthogonal array.
\end{proof}
\vskip 10pt

Let $C$ be a frameproof code of length $l$ over $S$. Denote the
symmetric group on $S$ by $Sym(S)$.  For each $i\in [l]$,
$\sigma\in Sym(S)$ and for each codeword $b=(b_1,b_2,\ldots,b_l)$,
define
$b(\sigma,i)=(b_1,\ldots,b_{i-1},\sigma(b_i),b_{i+1},\ldots,b_l)$.
Finally, define $C(\sigma,i)=\{b(\sigma,i):b\in C\}$. It is
natural to obtain the following result.

\vskip 10pt
\begin{lemma}
\label{permutation} If $C$ is a $c$-frameproof code of length
$l$ over $S$, then $C(\sigma,i)$ is a $c$-frameproof code for each $i\in [l]$ and  $\sigma\in Sym(S)$.
\end{lemma}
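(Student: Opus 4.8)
The plan is to show that applying the permutation $\sigma$ to the $i$th coordinate preserves the defining property of a $c$-frameproof code. The key observation is that $\sigma$ acts bijectively on the alphabet $S$, and relabeling the symbols in a single fixed coordinate is a symmetry that commutes with the descendant operation in a suitable sense. So the essential content is that the combinatorial structure detecting frameproofness depends only on the pattern of agreements and disagreements between codewords coordinate by coordinate, and a bijective relabeling of one coordinate preserves exactly which pairs of words agree in that coordinate.

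Concretely, I would argue as follows. Fix $i\in[l]$ and $\sigma\in Sym(S)$, and write $C'=C(\sigma,i)$. Let $P'\subseteq C'$ with $|P'|\le c$, and suppose $x'\in desc(P')\cap C'$; the goal is to show $x'\in P'$. Since the map $b\mapsto b(\sigma,i)$ is a bijection from $C$ to $C'$ (its inverse applies $\sigma^{-1}$ in coordinate $i$), write $x'=x(\sigma,i)$ for a unique $x\in C$ and $P'=\{b(\sigma,i):b\in P\}$ for a unique $P\subseteq C$ with $|P|=|P'|\le c$. The main step is to verify that $x\in desc(P)$, i.e.\ to pull the descendant relation back through the permutation. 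For each coordinate $j\neq i$, the $j$th components are untouched by the relabeling, so $x_j$ agrees with some member of $P$ in coordinate $j$ exactly as $x'_j$ does with the corresponding member of $P'$. For coordinate $i$, we have $x'_i=\sigma(x_i)$ and the $i$th component of $b(\sigma,i)$ is $\sigma(b_i)$; since $\sigma$ is injective, $x'_i=\sigma(b_i)$ if and only if $x_i=b_i$. Hence $x\in desc(P)$.

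Having established $x\in desc(P)\cap C$, the $c$-frameproof property of the original code $C$ (which applies since $|P|\le c$) gives $x\in P$, and applying the bijection $b\mapsto b(\sigma,i)$ yields $x'=x(\sigma,i)\in P'$, as required. This shows $desc(P')\cap C'=P'$ for every $P'\subseteq C'$ with $|P'|\le c$, so $C'$ is $c$-frameproof.

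I do not expect any genuine obstacle here; the lemma is essentially a symmetry statement, and the only point requiring a moment of care is the bookkeeping at coordinate $i$, namely that the ``if and only if'' $x'_i=\sigma(b_i)\Leftrightarrow x_i=b_i$ uses the bijectivity (specifically injectivity) of $\sigma$. The rest is a routine transport of the descendant condition across a coordinatewise relabeling. I would keep the proof short, emphasizing the bijection $b\mapsto b(\sigma,i)$ and the agreement-preserving property of $\sigma$ in coordinate $i$.
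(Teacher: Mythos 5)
Your proof is correct and follows essentially the same route as the paper: both transport the descendant condition back through the map $b\mapsto b(\sigma,i)$, using the injectivity of $\sigma$ to handle coordinate $i$, and then invoke the frameproof property of $C$. The only cosmetic difference is that the paper phrases the argument as a proof by contradiction while you argue directly.
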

\begin{proof} The proof proceeds by contradiction. Assume that $C(\sigma,i)$ is not
$c$-frameproof, i.e., there exists a codeword $b\in C$ and a set
$P\subset C$ of cardinality $c$, such that $b(\sigma,i)\in
desc(P(\sigma,i))\cap C(\sigma,i)$ but $b(\sigma,i)\notin P(\sigma,i)$. By the
definition of descendant, for each $k\in [l]\setminus \{i\}$,
there exists $y\in P$ such that $b_k=y_k$. For $k=i$, there exists
$y\in P(\sigma,i)$ such that $\sigma(b_i)=\sigma(y_i)$, hence
$b_i=y_i$ because $\sigma$ is a permutation. Thus $b\in desc(P)$ but
$b\notin P$, which is a contradiction with the fact that $C$ is
$c$-frameproof.
\end{proof}
\vskip 10pt

 Let $S$ be a set of
size $s$ containing $\infty$. Suppose there exists an $OA(t,l,s)$ over $S$ which is an
$l\times s^t$ array. Denote the column vectors by $B_i$,
$i=0,1,\ldots,s^t-1$. By the definition of orthogonal array and
Lemma~\ref{permutation}, we can assume that $B_0$ is the all
$\infty$ vector. For each $i\in[s^t-1]$, $B_i$ contains at most
$t-1$ components with $\infty$. Furthermore, a vector $B_i$ is
uniquely determined by specifying $t$ of its components. By
Lemma~\ref{oa2code}, $B_i$, $i\in[s^t-1]$, form an $s$-ary length
$l$ $c$-frameproof code of cardinality $s^t-1$ satisfying Property
$P(t)$, where $c$ is any integer such that $l=c(t-1)+r$ for some
$r\in [c]$. Hence, we have the following construction by
Lemma~\ref{p2code}.

\vskip 10pt
\begin{lemma}
\label{oa2code1} Let $m$ be a prime power and $l,s,t$ be
positive integers such that $m\geq l-1$ and $2t-1\leq l$. Define
$q=(s-1)m+1$. If there exists an $OA(t,l,s)$, then there exists a
$q$-ary length $l$ $c$-frameproof code of cardinality
$\frac{(s^t-1)}{(s-1)^t}(q-1)^t$, where $c\geq t$ is any integer
such that $l=c(t-1)+r$ for some $r\in \{t,t+1,\ldots,c\}$.
\end{lemma}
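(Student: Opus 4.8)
The plan is to combine the construction described in the paragraph immediately preceding the statement with a direct application of Lemma~\ref{p2code}, and then to simplify the resulting cardinality via the relation $q-1=(s-1)m$. No new combinatorial argument is needed: the content is entirely a chaining of results already established together with an arithmetic identity.

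First I would invoke the construction set up just before the lemma. Starting from the given $OA(t,l,s)$ over an alphabet $S$ of size $s$ containing $\infty$, after applying Lemma~\ref{permutation} to normalize the all-$\infty$ column $B_0$, the remaining columns $B_i$ with $i\in[s^t-1]$ form an $s$-ary length $l$ $c$-frameproof code of cardinality $M=s^t-1$ satisfying Property $P(t)$, valid for any integer $c$ with $l=c(t-1)+r$ for some $r\in[c]$. Next I would verify that the hypotheses of Lemma~\ref{p2code} are met under the present assumptions: $m$ is a prime power with $m\geq l-1$, we have $2t-1\leq l$, and the stronger restriction $c\geq t$ together with $r\in\{t,t+1,\ldots,c\}$ selects exactly the range of $c$ for which Lemma~\ref{p2code} applies. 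Applying Lemma~\ref{p2code} to this code of cardinality $M=s^t-1$ then yields a $q$-ary length $l$ $c$-frameproof code of cardinality $Mm^t=(s^t-1)m^t$, where $q=(s-1)m+1$.

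Finally I would rewrite this cardinality in the claimed form. Since $q-1=(s-1)m$, we have $(q-1)^t=(s-1)^tm^t$, hence $m^t=(q-1)^t/(s-1)^t$ and
\begin{equation*}
(s^t-1)m^t=\frac{s^t-1}{(s-1)^t}(q-1)^t,
\end{equation*}
which is exactly the asserted size. The only points requiring care are checking that the index range for $c$ in Lemma~\ref{p2code} matches the range stated here, and confirming that the intermediate code genuinely satisfies Property $P(t)$; both are immediate from the preceding discussion, so I do not expect any real obstacle.
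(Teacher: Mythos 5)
Your proposal is correct and follows exactly the route the paper takes: the paper's own justification consists of the paragraph preceding the lemma (normalizing the all-$\infty$ column via Lemma~\ref{permutation}, extracting the $s^t-1$ remaining columns as a $c$-frameproof code with Property $P(t)$ via Lemma~\ref{oa2code}) followed by the single phrase ``Hence, we have the following construction by Lemma~\ref{p2code}.'' Your arithmetic conversion of $(s^t-1)m^t$ into $\frac{s^t-1}{(s-1)^t}(q-1)^t$ via $q-1=(s-1)m$ is the same implicit step.
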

\vskip 10pt

Applying Lemma~\ref{oa2code1} with the existence of $OA(2,s+1,s)$
for any prime power $s$, we show the following result.

\vskip 10pt
\begin{corollary}
\label{oa} Let $c\geq 2$ be an integer such that $c+1$ is a prime
power, and let $m\geq c+1$ be any prime power.  Then there exists a
$q$-ary $c$-frameproof code of length $c+2$ with cardinality
$\frac{c+2}{c}(q-1)^2$, where $q=cm+1$.
\end{corollary}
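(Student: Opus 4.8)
The plan is to derive this corollary as a direct specialization of Lemma~\ref{oa2code1}, taking $t=2$, $l=c+2$, and $s=c+1$. The choice $s=c+1$ is legitimate because $c+1$ is a prime power by hypothesis, and for every prime power $s$ there is a classical $OA(2,s+1,s)$ of index~$1$: index the $s^2$ columns by pairs $(a,b)\in\bbF_s^2$, let the first $s$ rows record the evaluations of the affine polynomial $f(X)=a+bX$ at the $s$ elements of $\bbF_s$, and let the last row record the leading coefficient $b$. Any two of these $s+1$ rows determine $f$ uniquely, so every $2\times 1$ column pattern occurs exactly once; this supplies the required $OA(2,c+2,c+1)$.

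First I would verify that the hypotheses of Lemma~\ref{oa2code1} hold for these parameters. With $t=2$ the condition $m\geq l-1=c+1$ is exactly the assumption $m\geq c+1$, and $2t-1=3\leq l=c+2$ holds since $c\geq 2$. One also needs $c\geq t=2$ together with a representation $l=c(t-1)+r$ for some $r\in\{t,\ldots,c\}$; here $l=c+2=c\cdot 1+2$ forces $r=2=t$, and $r=2$ indeed lies in $\{2,\ldots,c\}$ precisely because $c\geq 2$. Since $m$ is a prime power, the resulting alphabet size is $q=(s-1)m+1=cm+1$, matching the statement.

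It then remains to simplify the cardinality output by Lemma~\ref{oa2code1}, namely $\frac{s^t-1}{(s-1)^t}(q-1)^t$. Substituting $t=2$ and $s=c+1$ gives $\frac{(c+1)^2-1}{c^2}(q-1)^2=\frac{c(c+2)}{c^2}(q-1)^2=\frac{c+2}{c}(q-1)^2$, which is exactly the claimed size. I expect no genuine obstacle in this argument: once the instantiation $(t,l,s)=(2,c+2,c+1)$ is fixed, everything is bookkeeping. The only points deserving care are confirming the existence of the standard $OA(2,s+1,s)$ for the prime power $s=c+1$ and checking that the parameter $r=2$ really falls inside the admissible window $\{t,\ldots,c\}$, both of which rely on the mild hypothesis $c\geq 2$ already assumed.
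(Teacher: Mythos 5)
Your proposal is correct and follows essentially the same route as the paper: both instantiate Lemma~\ref{oa2code1} with $(t,l,s)=(2,c+2,c+1)$, invoke the classical $OA(2,s+1,s)$ for the prime power $s=c+1$, and simplify $\frac{s^2-1}{(s-1)^2}(q-1)^2$ to $\frac{c+2}{c}(q-1)^2$. Your extra verification of the hypotheses and the explicit polynomial construction of the orthogonal array are sound but amount to the same argument.
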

\begin{proof} Let $l=c+2$, $s=c+1$
and $t=r=2$, then there exists an $OA(2,l,s)$. By
Lemma~\ref{oa2code1}, there exists a $q$-ary $c$-frameproof code
of length $l$ with cardinality
$\frac{(s^2-1)}{(s-1)^2}(q-1)^2=\frac{(s+1)}{(s-1)}(q-1)^2=\frac{c+2}{c}(q-1)^2$.
\end{proof}
\vskip 10pt

Corollary~\ref{lowerbound1} and Corollary~\ref{oa} combine to
determine the values for $R_{c,c+2}$.

\vskip 10pt
\begin{theorem}
\label{r} Let $c\geq 2$ be an integer such that $c+1$ is a prime
power, then $R_{c,c+2}=(c+2)/c$.
\end{theorem}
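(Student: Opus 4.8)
The plan is to combine the general upper bound on $R_{c,l}$ from Corollary~\ref{lowerbound1} with the explicit construction in Corollary~\ref{oa} to pin down $R_{c,c+2}$ exactly. The strategy is the standard one for determining an asymptotic constant: establish matching upper and lower bounds for $R_{c,c+2}$, both equal to $(c+2)/c$.

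First I would apply Corollary~\ref{lowerbound1} with $l=c+2$. Since $c+2 = c\cdot 1 + 2$, the residue $t\in[c]$ satisfying $t\equiv l\pmod{c}$ is $t=2$ (assuming $c\geq 2$, so that $2\in[c]$), and $\lceil l/c\rceil = \lceil (c+2)/c\rceil = 2$. Substituting into the bound $R_{c,l}\leq l/(l-(t-1)\lceil l/c\rceil)$ gives
\begin{equation*}
R_{c,c+2}\leq \frac{c+2}{(c+2)-(2-1)\cdot 2}=\frac{c+2}{c},
\end{equation*}
which is precisely the claimed value. This yields the upper bound with no further work beyond the arithmetic substitution.

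For the matching lower bound, I would invoke Corollary~\ref{oa}, which asserts that for each prime power $m\geq c+1$ there is a $q$-ary $c$-frameproof code of length $c+2$ with cardinality $\frac{c+2}{c}(q-1)^2$, where $q=cm+1$. By definition $R_{c,c+2}(q)=M_{c,c+2}(q)/q^{\lceil l/c\rceil}=M_{c,c+2}(q)/q^2$, so this family of codes gives
\begin{equation*}
R_{c,c+2}(q)\geq \frac{(c+2)/c\,(q-1)^2}{q^2}=\frac{c+2}{c}\Big(1-\frac1q\Big)^2.
\end{equation*}
Letting $q\to\infty$ along the admissible values $q=cm+1$ (with $m$ ranging over prime powers, of which there are infinitely many) forces $R_{c,c+2}=\lim_{q\to\infty}R_{c,c+2}(q)\geq (c+2)/c$. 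Combined with the upper bound, equality $R_{c,c+2}=(c+2)/c$ follows.

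The one point requiring a little care is whether the limit defining $R_{c,c+2}$ is genuinely controlled by the subsequence $q=cm+1$ supplied by the construction. The hypothesis $c+1$ is a prime power guarantees the existence of the starting $OA(2,c+2,c+1)$, and since prime powers are unbounded, the values $q=cm+1$ tend to infinity; this suffices because $R_{c,c+2}$ is defined as a limit and the monotone upper bound from Corollary~\ref{lowerbound1} holds for every $q$, so no oscillation along other residue classes can pull the limit below $(c+2)/c$. Thus the main (and only real) obstacle is simply confirming that the construction's parameter family is cofinal in $q$, which it is. The rest is the elementary substitution of $t=2$ and $\lceil l/c\rceil=2$ into the two corollaries.
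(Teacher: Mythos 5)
Your upper bound is exactly the paper's: substitute $t=2$ and $\lceil l/c\rceil=2$ into Corollary~\ref{lowerbound1}. Your lower bound also starts from the right place, namely Corollary~\ref{oa}. The gap is in how you pass from the subsequence $q=cm+1$ ($m$ a prime power) to the limit over all $q$. What your subsequence argument actually gives is $\limsup_{q\to\infty}R_{c,c+2}(q)\geq (c+2)/c$; to conclude $R_{c,c+2}=(c+2)/c$ you need the $\liminf$ over \emph{all} $q$ to be at least $(c+2)/c$ as well, and your justification for this --- that ``the monotone upper bound from Corollary~\ref{lowerbound1} holds for every $q$, so no oscillation along other residue classes can pull the limit below $(c+2)/c$'' --- is a non sequitur: an upper bound on $R_{c,c+2}(q)$ can never prevent $R_{c,c+2}(q)$ from dipping \emph{below} $(c+2)/c$ at values of $q$ not of the form $cm+1$. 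Cofinality of the parameter family alone is not enough.

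The paper closes this gap with two further ingredients. First, monotonicity: $M_{c,c+2}(q)\geq M_{c,c+2}(q')$ whenever $q\geq q'$, since a $q'$-ary code is a fortiori a $q$-ary code. Second, density of prime powers: for an arbitrary $q$, let $q_l$ be the largest prime power with $cq_l+1\leq q$ and $q_u$ the smallest integer with $cq_u+1\geq q$; the prime number theorem gives $q_l/q_u=1-o(1)$, whence
\begin{equation*}
M_{c,c+2}(q)/q^2\;\geq\; M_{c,c+2}(cq_l+1)/q^2\;\geq\; \frac{c+2}{c}\Bigl(\frac{q_l}{q_u+1/c}\Bigr)^2\;\longrightarrow\;\frac{c+2}{c}.
\end{equation*}
This establishes the lower bound for every $q$, not merely along the constructed subsequence. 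You identified the right issue at the end of your write-up but resolved it with the wrong tool; replacing your final paragraph with the monotonicity-plus-prime-number-theorem argument makes the proof complete and brings it in line with the paper's.
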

\begin{proof} By Corollary~\ref{lowerbound1}, we have $R_{c,c+2}\leq (c+2)/c$. It remains to show that $R_{c,c+2}\geq (c+2)/c$. For a given value of $q$, let $q_l$ be the largest
prime power such that $cq_l+1\leq q$, and let $q_u$ be the
smallest integer such that $cq_u+1\geq q$. That is $q_l$ is the
largest prime power such that $q_l\leq q_u$. By the prime number
theorem, $q_l/q_u=1-o(1)$. By Corollary~\ref{oa}, we have
$M_{c,c+2}(cq_l+1)\geq \frac{c+2}{c}(cq_l)^2$. Hence
\begin{equation*}
\begin{split}
M_{c,c+2}(q)/q^2&\geq M_{c,c+2}(cq_l+1)/q^2\\&\geq
\frac{c+2}{c}(cq_l)^2/q^2\\&\geq
\frac{c+2}{c}(cq_l)^2/(cq_u+1)^2\\&=\frac{c+2}{c}\cdot(\frac{q_l}{q_u+1/c})^2,
\end{split}
\end{equation*}
which shows $R_{c,c+2}\geq (c+2)/c$. This completes the proof.
\end{proof}
\vskip 10pt

\section{Conclusion}

Determining the largest cardinality of a $q$-ary $c$-frameproof
code of length $l$,  $M_{c,l}(q)$ is a difficult problem for
general $c,l,q$. In this paper, we show that the leading term of
the upper bound for $M_{c,l}(q)$ in Theorem~\ref{lowerbound},
proposed by Blackburn \cite{Blackburn2003SIAM}, is tight when $c+1$ is a prime power and
$l=c+2$, by
constructing corresponding frameproof codes of sufficiently large cardinality.

\section*{Acknowledgments}

The authors thank the anonymous reviewers for
their constructive comments and suggestions that greatly improved
the readability of this article. 
The authors express their gratitude to Professor Ying Miao for
kindly mentioning this topic to them.

\vskip 10pt
%\bibliographystyle{IEEEtran}
%\bibliography{C:/JabRefdata/mine}

\begin{IEEEbiographynophoto}{Yeow~Meng~Chee}  received the B.Math. degree in computer science and
combinatorics and optimization and the M.Math. and Ph.D. degrees in computer science,
from the University of Waterloo, Waterloo, ON, Canada, in 1988, 1989, and 1996, respectively.

Currently, he is an Associate Professor at 
the Division of Mathematical Sciences, School of Physical
and Mathematical Sciences, Nanyang Technological University, Singapore.
Prior to this, he was Program Director of Interactive Digital Media R\&D in the
Media Development Authority of Singapore,
Postdoctoral Fellow at the University of Waterloo and
IBM's Z{\"u}rich Research Laboratory, General Manager of the Singapore Computer Emergency
Response Team, and 
Deputy Director of Strategic Programs at the Infocomm Development Authority, Singapore.
His research interest lies in the interplay between combinatorics and computer science/engineering,
particularly combinatorial design theory, coding theory, extremal set systems,
and electronic design automation.

\end{IEEEbiographynophoto}

\begin{IEEEbiographynophoto}{Xiande~Zhang} received the Ph.D.
degree in mathematics from Zhejiang University, Hangzhou,
Zhejiang, P. R. China in 2009. During 2009--2011, she held a
postdoctoral position with  Mathematical Sciences, Nanyang
Technological University, Singapore. She is now a research
fellow with School of Mathematical Sciences, Monash University,
Australia. Her research interests include combinatorial design
theory, coding theory, cryptography, and their interactions.
\end{IEEEbiographynophoto}

\end{document}